

\documentclass[11pt,a4paper]{amsart}
\usepackage[utf8]{inputenc}
\usepackage{amsmath}
\usepackage{amsfonts}
\usepackage{amssymb}
\usepackage[dvipsnames]{xcolor}
\usepackage{amsmath,amssymb, amsthm}
\usepackage[smalltableaux]{ytableau}
\usepackage{hyperref}
\usepackage[english]{babel}
\usepackage{mathtools}
\usepackage{geometry}

\allowdisplaybreaks[2] 

\geometry
{a4paper,
left = 2.5cm,
right = 2.5cm,
top = 2cm,
bottom = 2cm
}
\definecolor{lightgray}{gray}{0.8}
\newcommand{\CoTh}{lightgray}  


\theoremstyle{plain}
\newtheorem{theorem}{Theorem}

\newtheorem{lemma}{Lemma}
\newtheorem{corollary}{Corollary}


\newcommand{\Q}{\mathbb{Q}}
\newcommand{\R}{\mathbb{R}}

\newcommand{\lp}{\left(}
\newcommand{\rp}{\right)}
\newcommand{\defeq}{\coloneqq}
\newcommand{\epsi}{\varepsilon}
\newcommand{\boldepsi}{\boldsymbol{\epsi}}
\newcommand{\intpart}[1]{\left\lfloor#1\right\rfloor}
\DeclareMathOperator{\odd}{odd}
\DeclareMathOperator{\even}{even}
\DeclareMathOperator{\Point}{P}
\DeclareMathOperator{\Horizontal}{H}
\DeclareMathOperator{\Vertical}{V}
\DeclareMathOperator{\Square}{S}
\DeclareMathOperator{\new}{new}

\newenvironment{List}{\begin{list}{$\bullet$}{
\setlength{\labelwidth}{.5cm}
\setlength{\leftmargin}{.7cm}
}
}{\end{list}}

\author[F.~Battistoni]{Francesco Battistoni}
\address{Laboratoire de math\'{e}matiques de Besan\c{c}on\\
         Universit\'{e} Bourgogne Franche-Comt\'{e}\\
         CNRS - UMR 6623\\
         16\\ Route de Gray\\
         25030 Besan\c{c}on\\
         France}
\email{francesco.battistoni@univ-fcomte.fr}
\thanks{The first author was supported by the French “Investissements d'Avenir"
program, project ISITE-BFC (contract ANR-15-IDEX-0003)}

\author[G.~Molteni]{Giuseppe Molteni}
\address{Dipartimento di Matematica\\
         Universit\`{a} di Milano\\
         via Saldini 50\\
         20133 Milano\\
         Italy}
\email{giuseppe.molteni1@unimi.it}

\keywords{Totally real fields, explicit bounds}

\subjclass[2010]{11R80, 11Y40}

\title{An elementary proof for a generalization of a Pohst's inequality}


\begin{document}

\begin{abstract}
Let
\[
P_n(y_1,\ldots,y_n)\defeq \prod_{1\leq i<j\leq n}\lp 1 -\frac{y_i}{y_j}\rp
\]
and
\[
P_n\defeq \sup_{(y_1,\ldots,y_n)}P_n(y_1,\ldots,y_n)
\]
where the supremum is taken over the $n$-ples $(y_1,\ldots,y_n)$ of real numbers satisfying $0 <|y_1| <
|y_2|< \cdots < |y_n|$. We prove that $P_n \leq 2^{\intpart{n/2}}$ for every $n$, i.e., we extend to all
$n$ the bound that Pohst proved for $n\leq 11$. As a consequence, the bound for the absolute discriminant
of a totally real field in terms of its regulator is now proved for every degree of the field.
\end{abstract}

\maketitle

\section{Introduction}
Let $y_1,\ldots,y_n$ be $n\geq 2$ non zero real numbers satisfying the condition
\begin{equation}\label{eq:A1}
  |y_1|< |y_2|< \cdots < |y_n|.
\end{equation}
Define then the positive real number
\[
P_n(y_1,\ldots,y_n)\defeq \prod_{1\leq i<j\leq n}\lp 1 -\frac{y_i}{y_j}\rp
\]
and consider
\[
P_n\defeq \sup_{(y_1,\ldots,y_n)}P_n(y_1,\ldots,y_n),
\]
where the supremum is taken over the $n$-ples of real numbers $(y_1,\ldots,y_n)$ which satisfy the
condition~\eqref{eq:A1}.

The goal of this paper is to provide an estimation for $P_n$ for every $n\geq 2$. This is motivated by
number theoretic reasons: in fact, let $K$ be a number field of degree $n\geq 2$ and let $\epsi$ be a
unit of its ring of integers such that $K=\Q(\epsi)$. The discriminant $d_K$ of the field $K$ divides the
discriminant of the minimum polynomial of $\epsi$, inducing the inequality
\[
|d_K|\leq \prod_{1\leq i < j\leq n}|\epsi_i-\epsi_j|^2
\leq \prod_{k=2}^n |\epsi_k|^{2(k-1)} \cdot \prod_{1\leq i<j\leq n}\lp 1 -\frac{\epsi_i}{\epsi_j}\rp^2
\leq \prod_{k=2}^n |\epsi_k|^{2(k-1)} \cdot P_n^2.
\]
Furthermore, when $K$ is totally real and primitive (i.e. has no proper subfields) it is possible to
estimate the remaining product in terms of the regulator $R_K$ of $K$ with classical methods from
geometry of numbers (for example see~\cite{AstudilloDiaz-y-DiazFriedman}), and one obtains that
\begin{align*}
\log|d_K|
\leq \sqrt{\gamma_{n-1}\cdot\frac{n^3-n}{3}}\cdot(\sqrt{n} R_K)^{1/(n-1)} + 2\log P_n,
\end{align*}
where $\gamma_{n-1}$ denotes the Hermite constant of dimension $n-1$ (for the definition of this constant
see \cite[Ch.~3, Sec.~3]{PohstZassenhaus}). Thus, any estimation for $P_n$ provides an estimation for the
discriminant $d_K$. More precisely, Remak~\cite{Remak} first showed that $P_n \leq n^{n/2}$ for every
$n$; Pohst~\cite{Pohst} improved the bound to $P_n \leq 2^{\intpart{n/2}}$ for every $n\leq 11$, and
Bertin~\cite{Bertin} produced a new proof of Remak's estimate. In the same paper Bertin also gave an
argument trying to prove that Pohst's estimation holds for every $n$, but her procedure is not completely
convincing. In this paper we prove that Pohst's estimation holds indeed for every $n$.

In order to achieve this result, following aforementioned works, we choose a slightly different function
to estimate: given $P_n(y_1,\ldots,y_n)$, we define the change of variables
\begin{equation}\label{eq:A2}
x_i\defeq \frac{y_i}{y_{i+1}},
\qquad
i=1,\ldots,n-1
\end{equation}
which transforms $P_n(y_1,\ldots,y_n)$ into the quantity
\[
Q_{n-1}(x_1,\ldots,x_{n-1}) \defeq \prod_{i=1}^{n-1}\prod_{j=i}^{n-1} \lp 1-\prod_{k=i}^j x_k \rp.
\]
Since $|x_i|\leq 1$ for every $i$, the polynomials $Q_{n}$ are non negative over the cube
$D_n\defeq [-1,1]^n$, and we look for
\[
M_n\defeq \max_{(x_1,\ldots,x_n)\in D_n} Q_n(x_1,\ldots,x_n).
\]
The change of variables~\eqref{eq:A2} shows that $P_n = M_{n-1}$ for every $n\geq 2$. Starting from this,
in the next sections we will prove the following theorem.
\begin{theorem}\label{th:A1}
The maximum $M_n$ of $Q_n$ in $D_n$ is $2^{\intpart{\frac{n+1}{2}}}$ for every $n$, so that $P_n =
M_{n-1} = 2^{\intpart{n/2}}$ for every $n\geq 2$.
\end{theorem}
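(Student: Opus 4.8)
The lower bound $M_n \geq 2^{\intpart{(n+1)/2}}$ should be the easy half: one exhibits an explicit point in $D_n$ achieving the value, presumably one of the form where consecutive $x_i$ alternate between $-1$ and something close to $0$ (or where $x_i = -1$ in the right pattern), and checks directly that $Q_n$ evaluated there equals $2^{\intpart{(n+1)/2}}$. The bulk of the work is the upper bound $M_n \leq 2^{\intpart{(n+1)/2}}$.

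For the upper bound, the natural first move is to analyze where the maximum of $Q_n$ on the compact cube $D_n$ is attained. I would argue that at a maximizing point each coordinate $x_k$ lies in $\{-1,1\}$ or else satisfies a stationarity condition; the key structural claim to aim for is that one may assume the maximizer lies on the boundary, with many coordinates equal to $\pm 1$. Setting $x_k = \pm 1$ collapses factors $\lp 1 - \prod_{i=k}^j x_k\rp$: when a partial product equals $1$ the whole thing vanishes (so that can't happen at a maximum), and when it equals $-1$ it contributes a factor $2$. So the combinatorial heart is to bound how many factors of $2$ can be harvested this way, and to set up a recursion in which fixing $x_k = \pm 1$ splits $Q_n$ into a product of lower-degree $Q$'s times a bounded number of $2$'s — this is where the $\intpart{(n+1)/2}$ arises, roughly one factor of $2$ per two coordinates. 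The identity to exploit is that $Q_n(x_1,\dots,x_n)$, upon specializing $x_k$, factors through $Q_{k-1}(x_1,\dots,x_{k-1})$ and $Q_{n-k}(x_{k+1},\dots,x_n)$ together with the "cross" factors $\prod_{i\le k\le j}\lp 1 - \prod_{\ell=i}^j x_\ell\rp$, and one must estimate those cross factors carefully.

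I would therefore carry out the steps in this order: (i) record the factorization/recursive structure of $Q_n$ under specialization of one variable; (ii) prove the elementary bound that each individual factor $\lp 1 - \prod x\rp$ lies in $[0,2]$ on $D_n$, and sharper, that the product of the "cross" factors through a chosen index is at most $2$ (or a small explicit constant), using $|x_i|\le 1$; (iii) show the maximum is attained at a point where enough coordinates are $\pm 1$ to make the recursion close; (iv) run the induction, matching $2^{\intpart{(n+1)/2}} = 2 \cdot 2^{\intpart{(n-1)/2}}$ (for the parity step) against the split $Q_n \le 2 \cdot Q_{n-2}$-type inequality. Finally, translate back via $P_n = M_{n-1}$ to get $P_n \le 2^{\intpart{n/2}}$, and combine with the lower bound.

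The main obstacle I anticipate is step (ii)–(iii): controlling the cross factors $\prod_{i\le k\le j}\lp 1-\prod_{\ell=i}^j x_\ell\rp$ when the coordinates are \emph{not} all $\pm 1$, i.e., proving that pushing a non-extremal coordinate to the boundary does not decrease $Q_n$, or more precisely that the maximum configuration has the rigid alternating-sign structure that makes the factor count come out to exactly $\intpart{(n+1)/2}$. This is essentially the point where Bertin's argument was "not completely convincing", so the delicate part is an honest case analysis — likely splitting on the sign pattern of consecutive coordinates and using an exchange/smoothing argument — to rule out configurations that might naively seem to give more than $\intpart{(n+1)/2}$ factors of $2$.
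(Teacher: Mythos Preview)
Your proposal is a plan rather than a proof, and you correctly flag the obstacle without resolving it. Two concrete problems. First, step~(iii) is misaimed: the maximum of $Q_n$ is attained at points such as $(-1,0,-1,0,\ldots)$, where half the coordinates are $0$; these are \emph{interior} in those coordinates, so an argument that ``pushes a non-extremal coordinate to the boundary $\{-1,1\}$'' is pointing the wrong way. Any smoothing/exchange argument would have to send some coordinates to $0$ and others to $-1$, and you give no mechanism for deciding which; $Q_n$ is not monotone in any coordinate on the cube, so this does not come for free. This is exactly the step at which Bertin's argument is regarded as unconvincing, and your outline does not go beyond hoping an ``honest case analysis'' succeeds. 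Second, the intended recursion $Q_n \leq 2\cdot Q_{n-2}$ via specializing one $x_k$ hinges on bounding the block of cross factors $\prod_{i\leq k\leq j}\bigl(1-\prod_{\ell=i}^{j} x_\ell\bigr)$ by a single factor of $2$; there are on the order of $k(n-k)$ such factors, each in $[0,2]$, and nothing in your sketch controls their product.

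The paper's route is genuinely different and sidesteps both issues: it never tries to locate the maximizer or push to the boundary. It splits $D_n$ into the $2^n$ sign chambers; on the chamber with sign vector $\boldepsi$, $Q_n$ becomes a function $F_{C(\boldepsi)}$ on $[0,1]^n$ indexed by a triangular array of signs (a ``graphical scheme''). Four elementary product inequalities on $[0,1]$ (Lemma~1) yield four ``moves'' on schemes (Point, Horizontal, Vertical, Square) each of which \emph{pointwise} increases the associated function on $[0,1]^n$ (Lemma~2). The combinatorial heart (Theorem~2) is a constructive algorithm showing that \emph{every} scheme $C(\boldepsi)$ can be carried by finitely many such moves to the single scheme $C_-$ coming from $\boldepsi_-=(-1,\ldots,-1)$. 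Hence $F_{C(\boldepsi)}\leq F_{C_-}$ pointwise for all $\boldepsi$, and a short two-row induction (Lemma~3) gives $F_{C_-}\leq 2^{\lfloor (n+1)/2\rfloor}$. The comparison is global and pointwise, so no stationarity, boundary, or case analysis on maximizers is needed; what your plan leaves as the ``delicate part'' is simply absent from the paper's argument.
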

It is easy to verify that $Q_n$ attains its maximum at $(-1,0,-1,0,\ldots)$ when $n$ is odd, while for an
even $n$ this happens at each point $([-1,0]^k,[0,-1]^{n/2-k})$ for any choice of $k=0,1,\ldots,n/2$
(here $[-1,0]^k$ means that the string $[-1,0]$ has to be repeated $k$ times, the same for
$[0,-1]^{n/2-k}$). Our argument proving Theorem~\ref{th:A1} can be adapted to prove also that these are
the unique points where $Q_n$ attains its maximum, but we leave to the interested reader a formal proof
of this fact.
%
%
%
%
\begin{corollary}
Let $K$ be a totally real and primitive field of degree $n\geq 2$ having discriminant $d_K$ and regulator
$R_K$. Then
\[
\log|d_K| \leq \sqrt{\gamma_{n-1}\cdot \frac{n^3-n}{3}}\cdot(\sqrt{n} R_K)^{1/(n-1)} + \intpart{\frac{n}{2}}\log 4 .
\]
\end{corollary}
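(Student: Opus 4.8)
The plan is to read off the corollary directly from Theorem~\ref{th:A1}: all of the genuine work is already done, and what remains is to substitute the bound $P_n = 2^{\intpart{n/2}}$ into the chain of inequalities recorded in the Introduction. Concretely, I would proceed in three short steps: (i) set up the arithmetic input, i.e.\ produce a generating unit and bound $|d_K|$ by $\prod_{k}|\epsi_k|^{2(k-1)}\cdot P_n^2$; (ii) bound the monomial factor $\prod_k |\epsi_k|^{2(k-1)}$ in terms of $R_K$ by the classical geometry-of-numbers argument; (iii) insert $P_n = 2^{\intpart{n/2}}$.

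For step (i), since $K$ is totally real and primitive of degree $n\geq 2$ its unit rank $n-1$ is positive, so there is a unit $\epsi$ of the ring of integers with $\epsi\neq\pm 1$; as $\epsi\notin\Q$ and $K$ has no proper subfield, $K=\Q(\epsi)$. Order the conjugates of $\epsi$ (which are real, nonzero, and pairwise distinct) so that $|\epsi_1|\leq\cdots\leq|\epsi_n|$. Because $d_K$ divides the discriminant of the minimal polynomial of $\epsi$, the factorisation $\epsi_i-\epsi_j=\epsi_j\lp 1-\epsi_i/\epsi_j\rp$ gives
\[
|d_K|\leq\prod_{1\leq i<j\leq n}|\epsi_i-\epsi_j|^2=\prod_{k=2}^n|\epsi_k|^{2(k-1)}\cdot\prod_{1\leq i<j\leq n}\lp 1-\frac{\epsi_i}{\epsi_j}\rp^2\leq \prod_{k=2}^n|\epsi_k|^{2(k-1)}\cdot P_n^2,
\]
exactly as in the Introduction; the only subtlety, namely that two of the $|\epsi_k|$ might coincide so that the tuple is not strictly ordered as in~\eqref{eq:A1}, is dealt with by a routine continuity/approximation argument, the factors $(1-\epsi_i/\epsi_j)$ staying bounded since the $\epsi_j$ are nonzero and the conjugates distinct.

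For step (ii), taking logarithms and estimating $\log\prod_{k=2}^n|\epsi_k|^{2(k-1)}$ by $\sqrt{\gamma_{n-1}\cdot(n^3-n)/3}\cdot(\sqrt{n}\,R_K)^{1/(n-1)}$ — the classical bound obtained by applying Minkowski's theorem, through the Hermite constant $\gamma_{n-1}$, to the logarithmic unit lattice, as recalled in the Introduction — yields $\log|d_K|\leq\sqrt{\gamma_{n-1}\cdot(n^3-n)/3}\cdot(\sqrt{n}\,R_K)^{1/(n-1)}+2\log P_n$. Finally, for step (iii), Theorem~\ref{th:A1} gives $P_n=2^{\intpart{n/2}}$ for every $n\geq 2$, hence $2\log P_n=2\intpart{n/2}\log 2=\intpart{n/2}\log 4$; substituting this into the previous inequality gives the asserted bound. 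There is no real obstacle here: the difficult, uniform estimate $P_n\leq 2^{\intpart{n/2}}$ is precisely the content of Theorem~\ref{th:A1}, and the only point demanding a little care is the passage from a generic generating unit to an arbitrary one in step (i), which is standard.
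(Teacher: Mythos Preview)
Your proposal is correct and follows exactly the approach of the paper: the Corollary is stated there without a separate proof, being an immediate consequence of Theorem~\ref{th:A1} combined with the chain of inequalities already displayed in the Introduction, and your write-up simply makes this substitution explicit. The extra care you take (existence of a generating unit via primitivity, and the continuity argument to handle possible equalities among the $|\epsi_k|$) fills in small points the paper leaves implicit but does not depart from its reasoning.
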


\section{Basic inequalities}
An elementary computation shows that the maximums for the first two polynomials $Q_1(x_1)=(1-x_1)$ and
$Q_2(x_1,x_2)=(1-x_1)(1-x_1x_2)(1-x_2)$ are
\begin{equation}\label{eq:A3}
M_1=2,
\qquad
M_2=2,
\end{equation}
respectively. These numbers agree with the claim of the theorem. It is clear that the determination of
$M_n$ via local, i.e. analytic, methods involving partial derivatives becomes quickly infeasible as $n$
increases: we take a different and global, so to say, approach, where the polynomial is split in suitable
blocks and the maximum for the polynomial is deduced from the maximums of those blocks. These maximums
will be deduced from the following basic inequalities.
\begin{lemma}\label{lem:A1}
Let $x,y,z$ be real numbers in $[0,1]$. Then the following inequalities hold:
    \begin{equation}\label{eq:A4}
    (1-x)(1+xy) \leq 1,
    \end{equation}
    \begin{equation}\label{eq:A5}
    (1-x)(1+xy)\leq (1+x)(1-xy),
    \end{equation}
    \begin{equation}\label{eq:A6}
    (1-y)(1+xy)(1+yz)(1-xyz) \leq (1+y)(1-xy)(1-yz)(1+xyz),
    \end{equation}
    \begin{equation}\label{eq:A7}
    (1-y)(1+xy)(1+yz)(1-xyz) \leq 1.
    \end{equation}
\end{lemma}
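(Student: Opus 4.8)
The plan is to dispatch the two low-degree inequalities \eqref{eq:A4} and \eqref{eq:A5} by direct expansion, and then to reduce the two quartic inequalities \eqref{eq:A6} and \eqref{eq:A7} to the easier ones, so that the only genuine computation is for \eqref{eq:A5} and \eqref{eq:A6}. The recurring point is that all four left-hand sides are products of factors of the form $1\pm(\text{a quantity in }[0,1])$, hence they lie in $[0,1]$; in particular they are nonnegative, which is what makes a squaring step reversible.

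For \eqref{eq:A5} I would simply expand the difference, finding $(1+x)(1-xy)-(1-x)(1+xy)=2x(1-y)\geq 0$ for $x,y\in[0,1]$. Then \eqref{eq:A4} follows at once, either directly from $1+xy\leq 1+x$, which gives $(1-x)(1+xy)\leq(1-x)(1+x)=1-x^2\leq 1$, or by squaring \eqref{eq:A5}: since $(1-x)(1+xy)\geq 0$ one gets $[(1-x)(1+xy)]^2\leq(1-x)(1+x)(1+xy)(1-xy)=(1-x^2)(1-x^2y^2)\leq 1$.

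The substantive step is \eqref{eq:A6}, and I expect the only genuinely nonobvious idea of the whole lemma to be the substitution that trivializes it. I would introduce the products $p\defeq xy$, $q\defeq yz$, $r\defeq xyz$, which satisfy the key identity $pq=yr$ (both equal $xy^2z$). Writing $C\defeq pq=yr$, $A\defeq y+r$ and $B\defeq p+q$, one checks that the left-hand side of \eqref{eq:A6} equals $(1-A+C)(1+B+C)$ and the right-hand side equals $(1+A+C)(1-B+C)$; these are quadratics in $1+C$ differing only in the sign of the linear term, so the right-hand side minus the left-hand side is $2(1+C)(A-B)$. Since $A-B=y+xyz-xy-yz=y(1-x)(1-z)\geq 0$, this difference equals $2(1+xy^2z)\,y(1-x)(1-z)\geq 0$, which is \eqref{eq:A6}. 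The whole difficulty is in spotting the change of variables and the identity $pq=yr$; what follows is bookkeeping.

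Finally, for \eqref{eq:A7} I would observe that its left-hand side $L\defeq(1-y)(1+xy)(1+yz)(1-xyz)$ is exactly the left-hand side of \eqref{eq:A6} and is nonnegative, so by \eqref{eq:A6} it is at most the right-hand side $R$ of \eqref{eq:A6}. Hence $L^2\leq LR=(1-y^2)(1-x^2y^2)(1-y^2z^2)(1-x^2y^2z^2)$, and since each factor lies in $[0,1]$ we conclude $L^2\leq 1$, so $L\leq 1$. This settles all four inequalities, the only step needing a moment's thought being the change of variables in \eqref{eq:A6}.
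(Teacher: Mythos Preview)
Your proof is correct. For \eqref{eq:A4}, \eqref{eq:A5}, and \eqref{eq:A6} you arrive at essentially the same arguments as the paper: direct expansion for the first two, and for \eqref{eq:A6} your substitution $p=xy$, $q=yz$, $r=xyz$ with the identity $pq=yr$ leads cleanly to the very factorization $2y(1-x)(1-z)(1+xy^2z)$ that the paper simply states without derivation. The genuine difference is in \eqref{eq:A7}. The paper proves it independently of \eqref{eq:A6}: it expands the product, rearranges terms by sign, and then splits the resulting inequality into three auxiliary inequalities, each checked by hand. You instead bootstrap from \eqref{eq:A6} via the squaring trick
\[
L^{2}\leq LR=(1-y^{2})(1-x^{2}y^{2})(1-y^{2}z^{2})(1-x^{2}y^{2}z^{2})\leq 1,
\]
which is shorter and conceptually cleaner, turning \eqref{eq:A7} into a corollary of \eqref{eq:A6} rather than a separate computation; the paper's approach has the minor virtue of keeping \eqref{eq:A7} self-contained (it is in fact the one inequality attributed to Pohst). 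One small wording issue: your remark that the left-hand sides ``lie in $[0,1]$'' is premature (for \eqref{eq:A7} that is precisely the conclusion), but the only fact you actually use is nonnegativity, and that is clear since each factor $1\pm t$ with $t\in[0,1]$ lies in $[0,2]$.
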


\begin{proof}
\eqref{eq:A4} is obvious, since $1+xy\leq 1+x$.
\eqref{eq:A5} is reduced via direct computations to $-x+xy\leq 0$, which is clearly true.
For~\eqref{eq:A6}: the right hand side minus the left hand side factorizes as
\begin{align*}
    2y(1-x)(1-z)(1+xy^2z),
\end{align*}
%
which is nonnegative under our hypotheses.\\
Finally, \eqref{eq:A7} already appears in~\cite{Pohst}; for sake of completeness we recall here a quicker
proof. Compute all the products, remove the common terms, factor out $y$ and move the terms to left hand
side or right hand side according to the sign of the coefficient. In this way the inequality is proved to
be equivalent to
\begin{align*}
(y^3 z^2 + y^2z)x^2 + (y^2z^2 + yz   + 1    )x + xyz + z
\leq
(y^2 z^2 + y  z)x^2 + (y^2z   + yz^2 + y + z)x +  yz + 1.
\end{align*}
%
%
This inequality is true since it can be obtained adding the three inequalities
\begin{align*}
x^2y^3 z^2 + x^2y^2z + xy^2z^2
&\leq
x^2y^2 z^2 + x^2y  z + xy^2z,\\
x +y + z + xyz
&\leq
xy + xz +  yz + 1,\\
xyz   -y
&\leq
xyz^2
\end{align*}
which are true (the first one because each term appearing to the left contains and extra power with respect
to the corresponding term to the right, the second one because it can be written as $(1-x)(1-y)(1-z)\geq
0$, and the last one because $y(1-xz+xz^2)\geq 0$ in the given range).
\end{proof}

\section{Graphical schemes}
We call \emph{graphical scheme of dimension $n$} any triangular $n\times n$ array $C$ with symbols
``$+$'' or ``$-$'' in each entry $C_{i,j}$ with $1\leq i\leq j\leq n$. The following are some examples of
graphical schemes in dimension $n=3$ and $n=5$, respectively:
\[
\ytableausetup{nosmalltableaux}
\begin{ytableau}
 {+}   & {+}   & {-}   \\
 \none & {-}   & {+}   \\
 \none & \none & {+}   \\
\end{ytableau}
\qquad
\text{,}
\qquad
\begin{ytableau}
 {-}   & {-}   & {+}   & {+}   & {-}\\
 \none & {+}   & {-}   & {-}   & {+}\\
 \none & \none & {+}   & {+}   & {-}\\
 \none & \none & \none & {-}   & {-}\\
 \none & \none & \none & \none & {-}\\
\end{ytableau}
\qquad
\text{.}
\]
We associate with $C$ the function $F_C\colon [0,1]^n\to \R$ defined as
\[
F_C(z_1,\ldots,z_n)\defeq \prod_{i=1}^n\prod_{j=i}^n \lp 1 -C_{i,j}\prod_{k=i}^j z_k\rp,
\]
and we denote its $(i,j)$ factor as
\[
F_{C_{i,j}}\defeq 1 -C_{i,j}\prod_{k=i}^j z_k.
\]
Given two graphical schemes $C$ and $C'$ of dimension $n$, we say that $C\leq C'$ if
$F_C(z_1,\ldots,z_n)\leq F_{C'}(z_1,\ldots,z_n)$ for every choice of $(z_1,\ldots,z_n)\in [0,1]^n$. The
following lemma describes four basic moves that when performed on a given scheme produce a larger (in the
previous sense) scheme.
\begin{lemma}\label{lem:A2}
Let $C$ be a graphical scheme of dimension $n$.
\begin{List}
\item[P)](\textbf{Point}) Assume $C_{i,j}=+$. Let $C'$ be the graphical scheme defined by
\[
C_{r,s}'= \begin{cases}
                  - & (r,s)=(i,j)\\
            C_{r,s} & \text{otherwise.}
          \end{cases}
\]
Then  $C\leq C'$. Moreover, $F_{C_{i,j}}\leq 1$.
\item[H)](\textbf{Horizontal segment}) Assume $C_{i,j}=+$ and $C_{i,j+k}=-$, with $k\leq n-j$. Let
    $C'$ be the graphical scheme defined by
\[
C_{r,s}'= \begin{cases}
                  - & (r,s)=(i,j)\\
                  + & (r,s)=(i,j+k)\\
            C_{l,k} & \text{otherwise.}
          \end{cases}
\]
Then $C\leq C'$. Moreover, $F_{C_{i,j}}\cdot F_{C_{i,j+k}}\leq 1.$
\item[V)](\textbf{Vertical segment}) Assume $C_{i,j}=-$ and $C_{i+k,j}=+$ with $k\leq j-i$. Let $C'$
    be the graphical scheme defined by
\[
C_{r,s}'= \begin{cases}
                  + & (r,s)=(i,j)\\
                  - & (r,s)=(i+k,j)\\
            C_{l,k} & \text{otherwise.}
            \end{cases}
\]
Then $C\leq C'$. Moreover, $F_{C_{i,j}}\cdot F_{C_{i+k,j}}\leq 1.$
\item[S)](\textbf{Square}) Assume $C_{i,j}=-, C_{i,j+k}=+, C_{i+l,j}=+$ and $C_{i+l,j+k}=-$. Let $C'$
    be the graphical scheme defined by
\[
C_{r,s}'= \begin{cases}
                  + & (r,s)=(i,j)\\
                  - & (r,s)=(i,j+k)\\
                  - & (r,s)=(i+l,j)\\
                  + & (r,s)=(i+l,j+k)\\
            C_{l,k} & \text{otherwise.}
          \end{cases}
\]
Then $C\leq C'$. Moreover, $F_{C_{i,j}}F_{C_{i+l,j}}F_{C_{i,j+k}}F_{C_{i+l,j+k}}\leq 1$.
\end{List}
\end{lemma}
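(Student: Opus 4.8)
The plan is to reduce each of the four moves to one of the algebraic inequalities already established in Lemma~\ref{lem:A1}, by observing that in every case only a bounded number of factors of $F_C$ actually change, while all the remaining factors are identical in $F_C$ and $F_{C'}$. So I would first record the general principle: if $C$ and $C'$ agree outside a set $E$ of cells, then $F_{C'}-F_C\geq 0$ follows from $\prod_{(i,j)\in E}F_{C'_{i,j}}\geq \prod_{(i,j)\in E}F_{C_{i,j}}$, provided all the factors left untouched are nonnegative — which they are, since every factor $1-C_{i,j}\prod_{k=i}^j z_k$ lies in $[0,2]$ for $(z_1,\dots,z_n)\in[0,1]^n$. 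Thus the whole lemma is a factor-by-factor comparison, and the "moreover" clauses are exactly the statements that the changed factors, taken together, have product $\le 1$.

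Next I would treat the four moves in turn, in each case naming the relevant monomials. For \textbf{P}: the single changed factor goes from $1-\prod_{k=i}^j z_k$ to $1+\prod_{k=i}^j z_k$, which is obviously at least as large, and the product of the old factor with... well, here there is just one factor, and $F_{C_{i,j}}=1-\prod z_k\le 1$ is immediate; this also gives $C\le C'$ since $1-\prod z_k\le 1+\prod z_k$. For \textbf{H}: set $x\defeq\prod_{k=i}^{j}z_k$ and note that the cell $(i,j+k)$ contributes $\prod_{k'=i}^{j+k}z_{k'}=x\cdot y$ where $y\defeq\prod_{k'=j+1}^{j+k}z_{k'}\in[0,1]$; the two changed factors are $(1-x)(1+xy)$ before and $(1+x)(1-xy)$ after, so $C\le C'$ is precisely~\eqref{eq:A5} and the bound $F_{C_{i,j}}F_{C_{i,j+k}}=(1-x)(1+xy)\le 1$ is precisely~\eqref{eq:A4}. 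For \textbf{V}: by the symmetry $\prod_{k=i}^{j}z_k=\prod_{k=i}^{j}z_k$ and $\prod_{k=i+k}^{j}z_k$ differ by the factor $\prod_{k'=i}^{i+k-1}z_{k'}$, so writing the longer product as $xy$ and the shorter as $y$ puts us in exactly the same shape $(1-y)(1+xy)$ versus $(1+y)(1-xy)$, again covered by~\eqref{eq:A5} and~\eqref{eq:A4}.

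For \textbf{S}, the least routine case: here four factors change. Writing $y\defeq\prod_{k=i}^{j}z_k$, $x\defeq\prod_{k'=i}^{i+l-1}z_{k'}$ (the "vertical overhang") and $z\defeq\prod_{k'=j+1}^{j+k}z_{k'}$ (the "horizontal overhang"), all of $x,y,z$ lie in $[0,1]$, and the four cells $(i,j),(i+l,j),(i,j+k),(i+l,j+k)$ carry the monomials $y$, $xy$, $yz$, $xyz$ respectively, with signs $-,+,+,-$ before and $+,-,-,+$ after. Hence $C\le C'$ is exactly~\eqref{eq:A6} and the product bound is exactly~\eqref{eq:A7}. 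The one thing to check carefully is that these three auxiliary products are indeed the correct ones — i.e. that $\prod_{k=i}^{j+k}z_k = yz$ and $\prod_{k=i+l}^{j}z_k = $ (shorter) relates to $y$ by the factor $x$ — which is just bookkeeping with telescoping index ranges. I expect the main obstacle to be purely notational: keeping the roles of the two "side lengths" $k$ and $l$ and the base cell $(i,j)$ straight across all four moves, and making sure that in each move the cells outside the changed set genuinely coincide (in particular that no cell is asked to be simultaneously inside two of the named positions), so that the reduction to Lemma~\ref{lem:A1} is clean.
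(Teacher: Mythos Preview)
Your overall strategy coincides with the paper's: isolate the factors that change, reduce each move to one of the inequalities in Lemma~\ref{lem:A1}, and note that all other factors are shared. Cases P), H), and V) are handled correctly.

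In case S), however, your substitution is wrong. You set $y\defeq\prod_{k=i}^{j}z_k$ and $x\defeq\prod_{k'=i}^{i+l-1}z_{k'}$, and then assert that the cell $(i+l,j)$ carries the monomial $xy$. But the monomial at $(i+l,j)$ is $\prod_{k=i+l}^{j}z_k$, which in your notation is $y/x$, not $xy$; your product $xy$ contains the variables $z_i,\ldots,z_{i+l-1}$ \emph{twice} and is not the monomial of any cell. The same mistake propagates to $(i+l,j+k)$. Consequently, with your parameters the ``before'' product of factors is actually $(1+y)(1-y/x)(1-yz)(1+yz/x)$, not the left-hand side of~\eqref{eq:A6}, and the appeal to~\eqref{eq:A6}--\eqref{eq:A7} collapses.

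The fix is exactly the one the paper makes: anchor $y$ at the \emph{inner} corner $(i+l,j)$, i.e.\ set $y\defeq\prod_{k=i+l}^{j}z_k$, keep your $x$ and $z$ as they are, and then the four cells $(i+l,j)$, $(i,j)$, $(i+l,j+k)$, $(i,j+k)$ carry the monomials $y,\ xy,\ yz,\ xyz$ with factors $(1-y)(1+xy)(1+yz)(1-xyz)$, matching~\eqref{eq:A6} and~\eqref{eq:A7} on the nose. (Note also that $C_{i,j}={-}$ gives the factor $1+\prod z_k$, not $1-\prod z_k$; keeping this sign convention straight is part of why the anchor must be at $(i+l,j)$.) You yourself flagged this bookkeeping as ``the one thing to check carefully''; it is indeed the place where the argument as written breaks.
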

\noindent %
We introduce a notation for these moves:
\begin{List}
\item[P)] {\bf Point}:
$\Point(i;j)$ denotes the change of
$\ytableausetup{smalltableaux,aligntableaux=bottom}
\begin{ytableau}
 \none     & \none[j]\\
 \none[i]  & {+}     \\
\end{ytableau}
$
\ into
$
\begin{ytableau}
 \none     & \none[j]\\
 \none[i]  & {-}     \\
\end{ytableau}
$\ ,
\item[H)] {\bf Horizontal}:
$\Horizontal(i;j,j')$ denotes the change of
$\ytableausetup{smalltableaux,aligntableaux=bottom}
\begin{ytableau}
 \none     & \none[j] & \none[j']\\
 \none[i]  & {+}      & {-}      \\
\end{ytableau}
$
\ into
$
\begin{ytableau}
 \none     & \none[j] & \none[j']\\
 \none[i]  & {-}      & {+}      \\
\end{ytableau}
$\ ,
\item[V)] {\bf Vertical}:
$\Vertical(i,i';j)$ denotes the change of
$\ytableausetup{smalltableaux,aligntableaux=bottom}
\begin{ytableau}
 \none     & \none[j] \\
 \none[i]  & {-}      \\
 \none[i'] & {+}      \\
\end{ytableau}
$
\ into
$
\begin{ytableau}
 \none     & \none[j] \\
 \none[i]  & {+}      \\
 \none[i'] & {-}      \\
\end{ytableau}
$\ ,
\item[S)] {\bf Square}:
$\Square(i,i';j,j')$ denotes the change of
$\ytableausetup{smalltableaux,aligntableaux=bottom}
\begin{ytableau}
 \none     & \none[j] & \none[j']\\
 \none[i]  & {-}      & {+}      \\
 \none[i'] & {+}      & {-}      \\
\end{ytableau}
$
\ into
$
\begin{ytableau}
 \none     & \none[j] & \none[j']\\
 \none[i]  & {+}      & {-}      \\
 \none[i'] & {-}      & {+}      \\
\end{ytableau}
$\ .
\end{List}
\begin{proof}\mbox{}
\begin{List}
\item[P)] We have
\[
F_{C_{i,j}}
=    1-\prod_{k=i}^j z_k
\leq 1+\prod_{k=i}^j z_k
\]
and since every other factor of $F_C$ remains unchanged, we get $F_C\leq F_{C'}$. The statement
$F_{C_{i,j}}\leq 1$ is immediate.
\item[H)] $F_{C_{i,j}}\cdot F_{C_{i,j+k}}\leq 1$ is a direct consequence of~\eqref{eq:A4},
    while~\eqref{eq:A5} implies
\begin{align*}
F_{C_{i,j}}\cdot F_{C_{i,j+k}} &=
\lp 1-\prod_{l=i}^j z_l\rp  \lp 1+\prod_{l=i}^{j}z_l\prod_{l=j+1}^{j+k} z_l\rp
\\
&\leq\lp 1+\prod_{l=i}^j z_l\rp  \lp 1-\prod_{l=i}^{j}z_l\prod_{l=j+1}^{j+k} z_l\rp
= F_{C_{i,j}'}\cdot F_{C_{i,j+k}'}
\end{align*}
and this proves $F_C\leq F_{C'}$ since every other factor is unchanged.
\item[V)] is proved in a similar way to case H).
\item[S)] $F_{C_{i,j}}F_{C_{i+l,j}}F_{C_{i,j+k}}F_{C_{i+l,j+k}}\leq 1$ is a direct application
    of~\eqref{eq:A7}, while~\eqref{eq:A6} implies
\begin{align*}
&F_{C_{i+l,j}}\cdot F_{C_{i,j}}\cdot F_{C_{i+l,j+k}}\cdot F_{C_{i,j+k}}\\
&=\lp 1{{-}}\prod_{v=i{+}l}^j z_v\rp  \lp 1{+}\prod_{v=i}^{i{+}l{-}1}z_v\prod_{v=i{+}l}^{j} z_v\rp
\lp 1{{+}}\prod_{v=i{+}l}^j z_v\prod_{v=j{+}1}^{j{+}k} z_v\rp  \lp 1{-}\prod_{v=i}^{i{+}l{-}1}z_v\prod_{v=i{+}l}^{j} z_v\prod_{v=j{+}1}^{j{+}k} z_v\rp\\
&\leq \lp 1{+}\prod_{v=i{+}l}^j z_v\rp  \lp 1{-}\prod_{v=i}^{i{+}l{-}1}z_v\prod_{v=i{+}l}^{j} z_v\rp
\lp 1{-}\prod_{v=i{+}l}^j z_v\prod_{v=j{+}1}^{j{+}k} z_v\rp  \lp 1{+}\prod_{v=i}^{i{+}l{-}1}z_v\prod_{v=i{+}l}^{j} z_v\prod_{v=j{+}1}^{j{+}k} z_v\rp\\
&=F_{C_{i+l,j}'}\cdot F_{C_{i,j}'}\cdot F_{C_{i+l,j+k}'}\cdot F_{C_{i,j+k}'}
\end{align*}
and this proves $F_C\leq F_{C'}$ since every other factor is unchanged.
\end{List}
\end{proof}

\section{Properties of the schemes generated by sign vectors}
Identifying numbers $\pm 1$ with symbols $\pm$, we can generate a graphical scheme $C(\boldepsi)$ from
each signs vector $\boldepsi\defeq (\epsi_1,\ldots,\epsi_n)$, $\epsi_k\in\{\pm 1\}$, by setting
$C(\boldepsi)_{i,j}\defeq \prod_{k=i}^j \epsi_k$ for every $(i,j)$. For example, the vector
$\boldepsi\defeq(1,1,-1,1,-1)$ generates the scheme
\[
C(\boldepsi)
=
\ytableausetup{nosmalltableaux,centertableaux}
\begin{ytableau}
 {+}   & {+}   & {-}   & {-}   & {+}\\
 \none & {+}   & {-}   & {-}   & {+}\\
 \none & \none & {-}   & {-}   & {+}\\
 \none & \none & \none & {+}   & {-}\\
 \none & \none & \none & \none & {-}\\
\end{ytableau}.
\ytableausetup{smalltableaux}
\]
The interest for this construction comes from the following remark. We can split $D_n=[-1,1]^n$ into
$2^n$ different chambers $D_{n,\boldepsi}$, each one associated with a different signs vector
$\boldepsi=(\epsi_1,\ldots,\epsi_n)$, where
\[
D_{n,\boldepsi}\defeq \{(x_1,\ldots,x_n)\in [-1,1]^n\colon x_i\epsi_i\geq 0,\ \forall i\}.
\]
Once we have chosen $D_{n,\boldepsi}$, the change of variables $z_i\defeq \epsi_i x_i$
transforms $D_{n,\boldepsi}$ into $[0,1]^n$, and $Q_n(x_1,\ldots,x_n)$ into
\[
Q_n(\epsi_1 z_1,\ldots,\epsi_n z_n)
= \prod_{i=1}^n\prod_{j=i}^n \lp 1 -\prod_{k=i}^j\epsi_k\prod_{k=i}^j z_k\rp,
\]
which is exactly the polynomial $F_{C(\boldepsi)}$ associated with the scheme $C(\boldepsi)$ generated
by the signs vector $\boldepsi$.
This gives us a strategy to prove Theorem~\ref{th:A1}: we will prove that for each scheme $C(\boldepsi)$
there is a list of moves $P$, $V$, $H$ and $S$ which transform $C(\boldepsi)$ into $C_-$, the
$n$-dimensional scheme generated by the signs $\boldepsi_-\defeq(-1,\cdots,-1)$ (see next
Theorem~\ref{th:A2}): by Lemma~\ref{lem:A2} these moves increase the value of the associated polynomial,
hence the maximum of each $F_{C(\boldepsi)}$ is lower than the one of $F_{C_-}$. In other words, this
means that the maximum of $Q_n$ in every chamber $D_{n,\boldepsi}$ is the one of $F_{C_-}$, at most.
Thus, the conclusion easily follows from the next lemma giving the maximum for $F_{C_-}$.
\begin{lemma}\label{lem:A3}
Let $C_{-}$ be the $n$-dimensional scheme generated by the signs $\boldepsi_-\defeq(-1,\cdots,-1)$.
Then
\[
F_{C_{-}}(z_1,\ldots,z_n)\leq 2^{\intpart{\frac{n+1}{2}}}
\qquad \forall (z_1,\ldots,z_n)\in [0,1]^n.
\]
\end{lemma}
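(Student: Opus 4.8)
The scheme $C_-$ has the sign $-$ in every cell, so its associated function is
\[
F_{C_-}(z_1,\ldots,z_n)=\prod_{i=1}^n\prod_{j=i}^n\Bigl(1+\prod_{k=i}^j z_k\Bigr),
\]
a product of $\binom{n+1}{2}$ factors, each lying in $[1,2]$. The plan is to bound this product by repeatedly pairing up factors whose product is at most $1$, thereby discarding most of the factors, and keeping only enough single factors (each $\le 2$) to account for the claimed $2^{\lfloor(n+1)/2\rfloor}$. The pairing mechanism is exactly the inequality \eqref{eq:A4}: for any $x,y\in[0,1]$ one has $(1-x)(1+xy)\le 1$, which in the all-$-$ world reads $(1+x)(1-(-x)y)$... more usefully, rewriting \eqref{eq:A4} as $(1+u)(1-uv)\le 1$ is false, so instead I will use it in the form that a factor $1+\prod_{k=i}^j z_k$ can be "killed" against a factor $1-\prod_{k=i'}^{j'}z_k$ whenever the second product is the first one multiplied by extra $z_k$'s in $[0,1]$. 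But all factors of $F_{C_-}$ have a $+$ sign, so there is nothing to cancel against directly; the real tool must be \eqref{eq:A6} and \eqref{eq:A7}, which handle blocks of four factors with mixed signs.

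Concretely, I would argue by induction on $n$, peeling off the first row and first column of $C_-$. The factors involving $z_1$ are $1+z_1,\ 1+z_1z_2,\ \ldots,\ 1+z_1\cdots z_n$ (the first row), and removing them leaves the $(n-1)$-dimensional all-$-$ scheme in the variables $z_2,\ldots,z_n$, whose $F$-value is at most $2^{\lfloor n/2\rfloor}$ by the inductive hypothesis. So it suffices to show that the product of the first-row factors is at most $2^{\lfloor(n+1)/2\rfloor-\lfloor n/2\rfloor}$, i.e. at most $2$ when $n$ is odd and at most $1$ when $n$ is even. For $n$ even this is the statement $\prod_{j=1}^{n}\bigl(1+\prod_{k=1}^j z_k\bigr)\le 1$, which is plainly false (each factor exceeds $1$). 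Hence a naive row-peeling does not work, and the factors must be regrouped across rows.

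The correct approach, and the one I expect the authors to use, is to pair each factor $1+\prod_{k=i}^{j}z_k$ of $F_{C_-}$ with a "partner" factor of $F_{C_-}$ so that each pair has product $\le 1$ via \eqref{eq:A4}: indeed \eqref{eq:A4} gives $(1-x)(1+xy)\le 1$, but one also trivially has nothing of the kind for two $(1+\cdot)$ factors, so the pairing must instead be of the type "the product of a whole $2\times 2$ block is $\le 1$." Here is the clean way: apply the moves $\Square$ in reverse is not allowed since moves only increase, so instead I observe that by symmetry of the supremum it is enough to bound $F_{C_-}$ at its maximum, and I claim the maximum is attained when the $z_i$ alternate between $1$ and $0$. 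Setting $z_i=1$ for $i$ odd and $z_i=0$ for $i$ even kills every factor $\prod_{k=i}^j z_k$ that contains an even index, leaving only the factors with $i=j$ odd, each equal to $1+1=2$; there are $\lceil n/2\rceil=\lfloor(n+1)/2\rfloor$ such indices, giving exactly $2^{\lfloor(n+1)/2\rfloor}$. So the real content of the lemma is the \emph{upper} bound $F_{C_-}\le 2^{\lfloor(n+1)/2\rfloor}$, and I would prove it by exhibiting an explicit grouping of the $\binom{n+1}{2}$ factors into $\lfloor(n+1)/2\rfloor$ singletons and a collection of $2\times 2$ blocks, applying \eqref{eq:A7} (which says each such block of four factors — read in the all-$+$ pattern as $(1-y)(1+xy)(1+yz)(1-xyz)$ after a sign bookkeeping — is $\le 1$) to the blocks and the trivial bound $\le 2$ to the singletons. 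The bookkeeping that makes \eqref{eq:A7} applicable — i.e. checking that after substituting the appropriate products of $z_k$'s the four chosen factors of $F_{C_-}$ really do fit the template of \eqref{eq:A7} — is the one routine-but-delicate step; the combinatorial heart is simply choosing the $2\times 2$ blocks to tile the triangular array $\{(i,j):1\le i\le j\le n\}$ minus a diagonal-ish set of $\lfloor(n+1)/2\rfloor$ cells, which can be done greedily row by row. I expect the main obstacle to be precisely this tiling-plus-template-matching verification, carried out by an induction that reduces the $n$ case to the $n-2$ case (stripping two variables and four-or-so factors at a time), which is why the bound jumps by a factor of $2$ every two steps.
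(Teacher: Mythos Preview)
Your proposal rests on a misreading of $C_-$. The scheme $C_-$ is \emph{generated by} the sign vector $\boldepsi_-=(-1,\ldots,-1)$, so its $(i,j)$ entry is $\prod_{k=i}^j(-1)=(-1)^{j-i+1}$, not $-$ in every cell. Each row of $C_-$ therefore reads $-,+,-,+,\ldots$, and
\[
F_{C_-}(z_1,\ldots,z_n)=\prod_{1\le i\le j\le n}\Bigl(1+(-1)^{j-i}\prod_{k=i}^j z_k\Bigr)
\]
is a genuine mixture of $(1+\cdot)$ and $(1-\cdot)$ factors. This is precisely why inequalities \eqref{eq:A4} and \eqref{eq:A7} apply directly, with no ``sign bookkeeping'' needed: the pattern was built to match them. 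Your struggle to make the cancellations work---the ``plainly false'' row estimate, the attempt to force \eqref{eq:A7} onto an all-$+$ block, the search for a reverse move---stems entirely from this initial error; with all factors $\ge 1$ no grouping could ever give a product $\le 1$.

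With the correct $C_-$, the paper's argument is close to the shape you guessed at the end. One strips the first \emph{two} rows (not one) and inducts from $n$ to $n-2$. The leftmost triangle in positions $(1,1),(1,2),(2,2)$ is exactly $Q_2(z_1,z_2)\le M_2=2$ by~\eqref{eq:A3}. The remaining cells in rows $1$ and $2$ (columns $3$ through $n$) split into $\lfloor(n-2)/2\rfloor$ consecutive $2\times 2$ blocks with sign pattern $\begin{smallmatrix}-&+\\+&-\end{smallmatrix}$, each with product $\le 1$ by part~S) of Lemma~\ref{lem:A2} (i.e.\ \eqref{eq:A7}), plus, when $n$ is odd, one trailing vertical pair $\begin{smallmatrix}-\\+\end{smallmatrix}$ with product $\le 1$ by part~V) (i.e.\ \eqref{eq:A4}). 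Hence the first two rows contribute at most $2$, what remains is $F_{C_{-,n-2}}$ in the variables $z_3,\ldots,z_n$, and induction gives $F_{C_-}\le 2\cdot 2^{\lfloor(n-1)/2\rfloor}=2^{\lfloor(n+1)/2\rfloor}$.
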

\begin{proof}
The graphical scheme $C_{-}$ has the form
\[
\ytableausetup{nosmalltableaux}
\begin{ytableau}
 {-}   & {+}   & {-}   & {+}   & \none[\ \cdots]\\
 \none & {-}   & {+}   & {-}   & \none[\ \cdots]\\
 \none & \none & {-}   & {+}   & \none[\ \cdots]\\
 \none & \none & \none & {-}   & \none[\ \cdots]\\
 \none & \none & \none & \none & \none[\ \cdots]\\
\end{ytableau}
\]
where every row starts with a sign $-$ and continues with alternating signs. We know that the claim for
$n=1$ and $n=2$ is true thanks to~\eqref{eq:A3}.
Let $n\geq 3$. If $n$ is odd, the scheme $C_{-}$ has the form
\[
\ytableausetup{nosmalltableaux}
\begin{ytableau}
 {-}   & {+}   & {-}   & {+}   & \none[\ \cdots \ ]  & {-}  & {+}   & {-}   \\
 \none & {-}   & {+}   & {-}   & \none[\ \cdots \ ]  & {+}  & {-}   & {+}   \\
 \none & \none & \none & \none & \none[\ C_{-,n-2}\ ]  & \none& \none & \none \\
\end{ytableau}
\]
while for $n$ even has the form
\[
\begin{ytableau}
 {-}   & {+}   & {-}   & {+}   & \none[\ \cdots \ ] & {-}   & {+}   \\
 \none & {-}   & {+}   & {-}   & \none[\ \cdots \ ] & {+}   & {-}   \\
 \none & \none & \none & \none & \none[\ C_{-,n-2}\ ] & \none & \none \\
\end{ytableau}
\]
where in both cases $C_{-,n-2}$ is the $n-2$-dimensional scheme defined by the $n-2$-long vector with all
minus signs. By inductive hypothesis, we have $F_{C_{-,n-2}}\leq 2^{\intpart{(n-1)/2}}$.\\
Let us look at the first two rows of $C_{{-}}$: here, the first two columns form a triangular array in
dimension $2$: hence $F_{C_{1,1}}F_{C_{1,2}}F_{C_{2,2}}\leq 2$ by Equation~\eqref{eq:A3}. Moreover, there
are
$\intpart{(n-2)/2}$ consecutive squares %
$\ytableausetup{smalltableaux,centertableaux}
\begin{ytableau}
 {-} & {+}\\
 {+} & {-}\\
\end{ytableau}$,
plus, in case $n$ is odd, an extra vertical segment
$\begin{ytableau}
 {-}\\
 {+}\\
\end{ytableau}$.
Entries~V) and S) of Lemma~\ref{lem:A2} prove that the contribution of each such square and of the
vertical segment are bounded by $1$. Hence, in every case the contribution of the first two rows is
estimated by $2$, and
\[
F_{C_{-}}
\leq 2\cdot F_{C_{-,n-2}}
\leq 2\cdot 2^{\intpart{\frac{n-1}{2}}}
=    2^{\intpart{\frac{n+1}{2}}}.
\]
\end{proof}
To succeed in this task we need to further investigate some properties of the schemes generated by sign
vectors; they are contained in next three lemmas.
\begin{lemma}\label{lem:A4}
Let $C(\boldepsi)$ be a scheme generated by the sign vector $\boldepsi$ of dimension $n\geq 3$. Let $i<
i'$, $j< j'$ with $i'<j$. The product of the four signs $C(\boldepsi)_{i,j}$, $C(\boldepsi)_{i',j}$,
$C(\boldepsi)_{i,j'}$ and $C(\boldepsi)_{i',j'}$ is $1$. In other words, the number of minus signs
in every square %
$\ytableausetup{aligntableaux=bottom}
\begin{ytableau}
 \none     & \none[j] & \none[j']\\
 \none[i]  & { }      & { }\\
 \none[i'] & { }      & { }
\end{ytableau}
\ytableausetup{nosmalltableaux}
$
is even.
\end{lemma}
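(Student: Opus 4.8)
The plan is to exploit directly the multiplicative definition of the scheme generated by a sign vector, namely $C(\boldepsi)_{i,j} = \prod_{k=i}^{j}\epsi_k$. Since each $\epsi_k \in \{\pm 1\}$, the four quantities in question are
\[
C(\boldepsi)_{i,j} = \prod_{k=i}^{j}\epsi_k,\qquad
C(\boldepsi)_{i',j} = \prod_{k=i'}^{j}\epsi_k,\qquad
C(\boldepsi)_{i,j'} = \prod_{k=i}^{j'}\epsi_k,\qquad
C(\boldepsi)_{i',j'} = \prod_{k=i'}^{j'}\epsi_k.
\]
The first step is simply to multiply these four products together and track how many times each $\epsi_k$ occurs.

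The key observation is that the intervals $[i,j]$, $[i',j]$, $[i,j']$, $[i',j']$ are of the four types one gets by choosing a left endpoint in $\{i,i'\}$ and a right endpoint in $\{j,j'\}$. Under the hypotheses $i<i'$ and $i'<j$ (and $j<j'$), we have $i<i'\le j<j'$, so the relevant breakpoints split the range $[i,j']$ into the blocks $[i,i'-1]$, $[i',j]$, $[j+1,j']$. An index $\epsi_k$ with $k\in[i,i'-1]$ lies in exactly two of the four intervals (those with left endpoint $i$); an index $\epsi_k$ with $k\in[i',j]$ lies in all four; an index $\epsi_k$ with $k\in[j+1,j']$ lies in exactly two of them (those with right endpoint $j'$). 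In every case the total multiplicity is even, so in the product $\prod$ over the four sign entries each $\epsi_k$ appears an even number of times and therefore contributes $\epsi_k^{\text{even}}=1$. Hence $C(\boldepsi)_{i,j}C(\boldepsi)_{i',j}C(\boldepsi)_{i,j'}C(\boldepsi)_{i',j'}=1$.

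Finally, I would translate the statement ``the product of the four signs is $1$'' into the combinatorial reformulation: writing $m$ for the number of entries among these four that equal $-$, the product equals $(-1)^m$, so $(-1)^m=1$ forces $m$ to be even; this is exactly the claim that every such square contains an even number of minus signs. The hypothesis $n\ge 3$ is just what is needed to guarantee that indices $i<i'<j<j'$ (hence a genuine $2\times 2$ square away from the diagonal) can exist. I do not expect any real obstacle here: the only mildly delicate point is being careful with the inclusive endpoints $i'$ versus $i'-1$ and $j$ versus $j+1$ in the block decomposition, i.e.\ making sure the three blocks $[i,i'-1]$, $[i',j]$, $[j+1,j']$ partition $[i,j']$ correctly, but this is a routine bookkeeping check rather than a substantive difficulty.
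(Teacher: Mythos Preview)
Your proposal is correct and follows essentially the same approach as the paper: both compute the product of the four entries directly from the definition $C(\boldepsi)_{i,j}=\prod_{k=i}^{j}\epsi_k$ and observe that every $\epsi_k$ occurs an even number of times. The paper's version is a bit slicker, pairing the factors by row so that $\prod_{k=i}^{j}\epsi_k\cdot\prod_{k=i}^{j'}\epsi_k=\prod_{k=j+1}^{j'}\epsi_k$ and likewise for row $i'$, leaving two identical products that square to $1$; your block decomposition of $[i,j']$ into $[i,i'-1]\cup[i',j]\cup[j+1,j']$ is just a different bookkeeping of the same cancellation.
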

\begin{proof}
In fact, we have
\begin{align*}
C(\boldepsi)_{i,j} C(\boldepsi)_{i',j} C(\boldepsi)_{i,j'} C(\boldepsi)_{i',j'}
= \prod_{k=i}^j \epsi_k \prod_{k=i'}^j \epsi_k \prod_{k=i}^{j'} \epsi_k \prod_{k=i'}^{j'} \epsi_k
= \prod_{k=j+1}^{j'} \epsi_k \prod_{k=j+1}^{j'} \epsi_k
= 1.
\end{align*}
\end{proof}
Let $C$ be a graphical scheme. We say that the sign $C_{i,j}$ is  \emph{correct} if $C_{i,j} =
(-1)^{i-j+1}$, otherwise we say that $C_{i,j}$ is \emph{wrong}. It is clear that the only graphical
scheme having only correct signs is $C_-$, i.e., the one generated by the signs vector
$\boldepsi_{-}\defeq(-1,\ldots,-1)$.
\begin{lemma}\label{lem:A5}
Let $C(\boldepsi)$ be a scheme generated by the sign vector $\boldepsi$ of dimension $n$ and for $i\leq j
\leq n$ let $H(i,j):=\sum_{u=i}^{j-1} C(\boldepsi)_{i,u}$ (the sum of entries in $C(\boldepsi)$ appearing
to the left of $C(\boldepsi)_{i,j}$), and $V(i,j):=\sum_{v=i+1}^{j} C(\boldepsi)_{v,j}$ (the sum of
entries in $C(\boldepsi)$ appearing below $C(\boldepsi)_{i,j}$). Suppose that $C(\boldepsi)_{i,j}=-$,
then $H(i,j) = -V(i,j)$.
\end{lemma}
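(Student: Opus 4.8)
The plan is to compute both sums explicitly in terms of the partial products of the signs $\epsi_k$, exactly as in the proof of Lemma~\ref{lem:A4}, and to exploit the hypothesis $C(\boldepsi)_{i,j}=\prod_{k=i}^j\epsi_k=-1$. First I would introduce the shorthand $\sigma_{a,b}:=\prod_{k=a}^b\epsi_k$, so that $C(\boldepsi)_{i,u}=\sigma_{i,u}$ and $C(\boldepsi)_{v,j}=\sigma_{v,j}$. Then
\[
H(i,j)=\sum_{u=i}^{j-1}\sigma_{i,u},\qquad V(i,j)=\sum_{v=i+1}^{j}\sigma_{v,j}.
\]

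The key observation is a pairing between the two index sets via the relation $\sigma_{i,u}\cdot\sigma_{u+1,j}=\sigma_{i,j}=C(\boldepsi)_{i,j}=-1$, valid for every $u$ with $i\le u\le j-1$. Hence $\sigma_{u+1,j}=-\sigma_{i,u}$ for each such $u$. As $u$ runs over $i,i+1,\dots,j-1$, the index $v:=u+1$ runs over $i+1,i+2,\dots,j$, which is precisely the range of summation for $V(i,j)$. Therefore
\[
V(i,j)=\sum_{v=i+1}^{j}\sigma_{v,j}=\sum_{u=i}^{j-1}\sigma_{u+1,j}=\sum_{u=i}^{j-1}\bigl(-\sigma_{i,u}\bigr)=-\sum_{u=i}^{j-1}\sigma_{i,u}=-H(i,j),
\]
which is the claim.

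I do not expect any serious obstacle here: the statement is essentially a bookkeeping identity about telescoping products of signs, and the only thing to be careful about is matching the two ranges of summation correctly and checking that the identity $\sigma_{i,u}\sigma_{u+1,j}=\sigma_{i,j}$ is used only for $u$ strictly less than $j$ (so that $\sigma_{u+1,j}$ is a genuine nonempty product of the $\epsi_k$'s) and for $u$ at least $i$ (so that $\sigma_{i,u}$ is nonempty). Both conditions hold on the range $i\le u\le j-1$, and the hypothesis $C(\boldepsi)_{i,j}=-$ is exactly what turns the product relation into the sign flip $\sigma_{u+1,j}=-\sigma_{i,u}$; without that hypothesis one would only get $V(i,j)=C(\boldepsi)_{i,j}\cdot H(i,j)$, which specializes to the asserted identity when $C(\boldepsi)_{i,j}=-1$.
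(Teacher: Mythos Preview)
Your proof is correct and follows essentially the same approach as the paper: both arguments use the identity $\prod_{k=i}^{u}\epsi_k \cdot \prod_{k=u+1}^{j}\epsi_k = C(\boldepsi)_{i,j}=-1$ to set up a bijection $u\mapsto u+1$ between the horizontal and vertical index sets that flips each sign. Your write-up is slightly more explicit about the reindexing and even notes the more general relation $V(i,j)=C(\boldepsi)_{i,j}\cdot H(i,j)$, but the underlying idea is identical.
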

\begin{proof}
In fact, $C(\boldepsi)_{i,u} = \prod_{k=i}^u \epsi_k$ and by hypothesis $C(\boldepsi)_{i,j} =
\prod_{k=i}^j \epsi_k = -1$. Thus, for $i\leq u\leq j-1$ we get
\[
C(\boldepsi)_{i,u}
= \prod_{k=i}^u \epsi_k
= - C(\boldepsi)_{i,j}\prod_{k=i}^u \epsi_k
= - \prod_{k=i}^j \epsi_k \prod_{k=i}^u \epsi_k
= - \prod_{k=u+1}^j \epsi_k
= - C(\boldepsi)_{u+1,j}.
\]
Hence, each term appearing below $C(\boldepsi)_{i,j}$ is opposite to a convenient term appearing to the
left of $C(\boldepsi)_{i,j}$, and vice versa.
\end{proof}
We introduce the following quantities, again under the hypothesis that $i\leq j$.
\begin{gather*}
H_{\pm}^w(i,j)\defeq \#\{k\colon i\leq k\leq j-1, C_{i,k}=\pm, C_{i,k} \text{ is wrong}\},\\
V_{\pm}^w(i,j)\defeq \#\{k\colon i+1\leq k\leq j, C_{k,j}=\pm, C_{k,j} \text{ is wrong}\},\\
H^w(i,j)\defeq H_{+}^w(i,j)-H_{-}^w(i,j),
\qquad
V^w(i,j)\defeq V_{+}^w(i,j)-V_{-}^w(i,j).
\end{gather*}
\begin{lemma}\label{lem:A6}
Let $C(\boldepsi)$ be a scheme generated by the sign vector $\boldepsi$ and assume that
$C(\boldepsi)_{i,j}=-$ and that $i+j$ is odd. Then $V(i,j) = 2V^w(i,j)-1$ and $H(i,j)=2H^w(i,j)-1$. We
know that $V(i,j)$ and $H(i,j)$ are opposite in sign by Lemma~\ref{lem:A5}, therefore $H^w(i,j) +
V^w(i,j) = 1$ and in particular, at least one between $H^w(i,j)$ and $V^w(i,j)$ is positive.
\end{lemma}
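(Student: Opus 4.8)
The plan is to count the entries appearing below $C(\boldepsi)_{i,j}$ (respectively, to the left of it) by sorting them into correct and wrong ones, and then to translate the statement $H(i,j) = -V(i,j)$ of Lemma~\ref{lem:A5} into the claimed relation between $H^w(i,j)$ and $V^w(i,j)$. First I would record the arithmetic of correctness along the column below $C(\boldepsi)_{i,j}$: for $i+1\le v\le j$ the sign $C(\boldepsi)_{v,j}$ is correct precisely when $C(\boldepsi)_{v,j} = (-1)^{v-j+1}$, so among the $j-i$ entries in that column the correct ones and the wrong ones contribute to $V(i,j)$ with determined signs. Concretely, a correct entry in position $(v,j)$ equals $(-1)^{v-j+1}$, and since consecutive rows alternate the parity of $v-j+1$, the correct entries contribute a sum whose value depends only on the parity pattern; the wrong ones contribute $V^w_+(i,j) - V^w_-(i,j) = V^w(i,j)$ but with the opposite of their ``correct'' sign. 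Summing, $V(i,j) = (\text{signed count of correct entries}) + 2V^w(i,j) - (\text{the part double-counted})$, and the hypothesis that $i+j$ is odd — equivalently $i-j$ is odd, so $j-i$ is odd and the column has an odd number $j-i$ of entries whose would-be-correct signs alternate starting appropriately — forces the signed count of all-correct entries in that column to be exactly $-1$. This yields $V(i,j) = 2V^w(i,j) - 1$.

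The key computation is therefore just this: if every entry in the column $C(\boldepsi)_{i+1,j},\ldots,C(\boldepsi)_{i,j}$'s column below $(i,j)$ were correct, the alternating sum $\sum_{v=i+1}^{j}(-1)^{v-j+1}$ has an odd number of terms (because $i+j$ odd means $j-i$ odd), alternates in sign, and — checking the two endpoints — telescopes to $-1$; replacing a correct entry by a wrong one changes that entry's value by $\pm 2$, with a $+2$ exactly when the wrong entry is a plus and a $-2$ when it is a minus, so the net change is $2(V^w_+(i,j)-V^w_-(i,j)) = 2V^w(i,j)$. The identical argument applied to the row to the left of $(i,j)$, using that this row also has an odd number $j-i$ of relevant entries with the same alternation (the ``correct'' value of $C_{i,k}$ is $(-1)^{i-k+1}$, again alternating with $k$), gives $H(i,j) = 2H^w(i,j) - 1$.

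Finally, combining with Lemma~\ref{lem:A5}, which asserts $H(i,j) = -V(i,j)$ since $C(\boldepsi)_{i,j} = -$, we get $2H^w(i,j) - 1 = -(2V^w(i,j) - 1)$, i.e.\ $2H^w(i,j) + 2V^w(i,j) = 2$, hence $H^w(i,j) + V^w(i,j) = 1$. Since these are integers summing to $1$, they cannot both be $\le 0$, so at least one of them is positive, which is the final assertion. The only real point requiring care — the ``main obstacle,'' though it is modest — is getting the base sum over a fully-correct line to be exactly $-1$ rather than $0$ or $+1$: this is where the hypothesis ``$i+j$ odd'' enters, ensuring the line has odd length and that the alternating $\pm 1$ pattern starts and ends so as to sum to $-1$; one should double-check the boundary cases (e.g.\ $j = i+1$, where the line is the single entry $C(\boldepsi)_{i,j} = -1$ itself's neighbour, giving sum $-1$) to be sure the parity bookkeeping is aligned with the definitions of $H$ and $V$.
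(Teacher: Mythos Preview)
Your argument is correct and follows essentially the same approach as the paper: both compute $V(i,j)$ (and analogously $H(i,j)$) by a direct parity count on the $j-i$ entries below (resp.\ left of) $C(\boldepsi)_{i,j}$, using that the ``correct'' signs alternate and that $j-i$ is odd. The paper organizes the count by splitting positions according to the parity of $j-l+1$ (which determines whether a wrong entry there would be $+$ or $-$), while you organize it as ``all-correct baseline $=-1$, then each wrong entry shifts the sum by $\pm 2$''; these are two presentations of the same computation.
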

\begin{proof}
Since $i+j$ is odd, there exist $j-i$ signs $C(\boldepsi)_{l,j}$ below $C(\boldepsi)_{i,j}$, and the
quantity $j-l+1$ is odd for $(j-i+1)/2$ of them, and is even for the remaining $(j-i-1)/2$ cases. Wrong
$+$'s below $C(\boldepsi)_{i,j}$ appear at positions $(l,j)$ where $j-l+1$ is odd, and every other sign
here which is not a wrong $+$ is necessarily a $-$ (actually a correct $-$, but this in not important
now), thus
\[
\sum_{\substack{l=i+1\\j-l+1 \odd}}^j C(\boldepsi)_{l,j}
= V_{+}^w(i,j) - \Big(\frac{1}{2}(j-i+1)-V_{+}^w(i,j)\Big)
= 2V_{+}^w(i,j) - \frac{1}{2}(j-i+1).
\]
Similarly, wrong $-$'s below $C(\boldepsi)_{i,j}$ appear at positions $(l,j)$ where $j-l+1$ is even, and
every other sign here which is not a wrong $-$ is necessarily a $+$, so that
\[
\sum_{\substack{l=i+1\\ j-l+1 \even}}^j C(\boldepsi)_{l,j}
= -V_{-}^w(i,j) + \Big(\frac{1}{2}(j-i-1)-V_{-}^w(i,j)\Big)
= -2V_{-}^w(i,j) + \frac{1}{2}(j-i-1)
\]
Thus
\begin{align*}
V(i,j)
 = \sum_{l=i+1}^j C(\boldepsi)_{l,j}
 = 2(V_{+}^w(i,j)-V_{-}^w(i,j)) - \frac{1}{2}(j-i+1) + \frac{1}{2}(j-i-1)
 = 2V^w(i,j)-1.
\end{align*}
The proof for $H(i,j)$ is similar.
\end{proof}

\section{The procedure}
We are now ready to prove the following theorem. As recalled in the previous section, it yields
Theorem~\ref{th:A1} as immediate corollary thanks to Lemma~\ref{lem:A2} and Lemma~\ref{lem:A3}.
\begin{theorem}\label{th:A2}
Let $C=C(\boldepsi)$ be the scheme generated by any signs vector $\boldepsi$ and let $C_{-}$ be the scheme
generated by the sign vector $\boldepsi$ with all negative signs. There is a list $\mathcal{L}$ of
transformations of type $\Point$, $\Horizontal$, $\Vertical$ and $\Square$ which changes $C$ into
$C_{-}$.
\end{theorem}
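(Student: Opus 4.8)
The plan is to give a constructive algorithm that ``cleans'' the scheme $C$ one entry at a time, always decreasing a suitable complexity measure. Take the complexity of a scheme to be the total number of wrong signs it contains; $C_-$ is characterized (as noted right before Lemma~\ref{lem:A5}) as the unique scheme with complexity $0$, so it suffices to show that whenever $C$ has a wrong sign we can apply one of the four moves $\Point$, $\Horizontal$, $\Vertical$, $\Square$ and strictly reduce this count, staying among schemes of the form $C(\boldepsi')$. In fact I would process the scheme diagonal by diagonal, from the main diagonal outwards (i.e. in increasing order of $j-i$): the key point is that each move of type $\Horizontal$, $\Vertical$ or $\Square$ only rearranges signs within a single row or within a single pair of rows along entries of the \emph{same} anti-diagonal distances as the ones being touched, so cleaning the short diagonals first and then longer ones never reintroduces errors on the already-cleaned part.

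Here is how a single cleaning step goes. Suppose the first $d-1$ diagonals are already correct and let $C_{i,j}$ with $j-i=d$ be a wrong sign; I claim we can make it correct without spoiling diagonals $<d$. If $C_{i,j}=-$ is wrong then $i+j$ is odd and we are exactly in the hypothesis of Lemma~\ref{lem:A6}, which tells us $H^w(i,j)+V^w(i,j)=1$; since the signs strictly to the left of $C_{i,j}$ in row $i$ and strictly below $C_{i,j}$ in column $j$ all lie on shorter diagonals, by our ordering they are already correct, hence $H^w(i,j)=V^w(i,j)=0$, contradicting $H^w+V^w=1$ — so in fact no wrong $-$ can occur on the ``current'' diagonal, and the remaining case is $C_{i,j}=+$ wrong, i.e. $C_{i,j}=+$ with $i+j$ even. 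For such an entry, since everything on shorter diagonals is clean, the correct sign at $(i,j)$ is $-$; I want to flip it to $-$. If $j<n$, look at the entry $C_{i,j+1}$ on diagonal $d+1$: it is $\pm$, and by Lemma~\ref{lem:A4} applied to any $2\times2$ block the parities are constrained, so either $C_{i,j+1}=-$ and an $\Horizontal(i;j,j+1)$ move does the job, or $C_{i,j+1}=+$ and we look one step further / one step down; more precisely, because the current diagonal has length $n-d$ and the clean part below and to the left forces the pattern, a short combinatorial case analysis (using Lemma~\ref{lem:A4} to know the $2\times2$ blocks have an even number of minus signs) shows that from a wrong $+$ at $(i,j)$ there is always an adjacent clean configuration enabling one of $\Horizontal$, $\Vertical$ or $\Square$ to flip it, each such move changing only entries on diagonals $\geq d$ and leaving diagonal $<d$ untouched. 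Applying this repeatedly exhausts diagonal $d$, then we pass to diagonal $d+1$, and after finitely many steps we reach $C_-$; the concatenation of all these moves is the desired list $\mathcal{L}$.

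The main obstacle I anticipate is the bookkeeping in the inductive step: I must verify that the ``adjacent clean configuration'' needed to trigger a move genuinely exists for \emph{every} wrong $+$ on the current diagonal, i.e. that the alternating pattern of a clean scheme plus Lemma~\ref{lem:A4}'s parity constraint really does leave no wrong $+$ stranded. The delicate feature is that an $\Horizontal$ move at $(i;j,j+k)$ with $k>1$ touches the entry $C_{i,j+k}$ which sits on a longer diagonal but \emph{also} the intermediate cells are untouched, so I should check the move definitions in Lemma~\ref{lem:A2} are stated with general gap $k$ (they are), and confirm that performing such a move keeps the scheme of the form $C(\boldepsi')$ for some sign vector $\boldepsi'$ — equivalently, that $C'$ still satisfies the $2\times2$-parity property of Lemma~\ref{lem:A4}, which one checks directly since flipping two signs in the same row by the $\Horizontal$ rule preserves all products $C_{i,j}C_{i',j}C_{i,j'}C_{i',j'}$. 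Once these structural invariants are nailed down, the termination argument (strictly decreasing number of wrong signs, or more simply: each diagonal is cleaned in order and never re-dirtied) is immediate, and Theorem~\ref{th:A2} follows.
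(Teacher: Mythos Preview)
Your approach has a genuine gap at its structural core: the claim that the moves $\Point$, $\Horizontal$, $\Vertical$, $\Square$ keep the scheme in the class $C(\boldepsi')$ is false, and your ``direct check'' for $\Horizontal$ is mistaken. Flipping the two entries $(i,j)$ and $(i,j')$ preserves the product $C_{i,b}C_{i'',b}C_{i,b'}C_{i'',b'}$ only when $\{b,b'\}=\{j,j'\}$; for any other column $b''\notin\{j,j'\}$ the product over rows $i,i''$ and columns $j,b''$ flips sign. (For $\Point$ this is even clearer: changing a single entry in a scheme of dimension $\geq 2$ destroys the parity of every $2\times 2$ block containing it.) Consequently, after the very first move the intermediate scheme is typically \emph{not} of the form $C(\boldepsi')$, and Lemmas~\ref{lem:A4}--\ref{lem:A6} no longer apply to it. Your argument that ``no wrong $-$ can occur on the current diagonal'' invokes Lemma~\ref{lem:A6} on the modified scheme and therefore breaks down.

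This is not merely a technicality; the algorithm can actually get stuck. Take $n=3$, $\boldepsi=(+,-,+)$. On diagonal $0$ the wrong $+$'s are at $(1,1)$ and $(3,3)$. Fixing $(1,1)$ by $\Horizontal(1;1,2)$ and $(3,3)$ by $\Point(3;3)$ (the entry $(3,3)$ has no column to its right, a case your description does not cover) cleans diagonal $0$ but leaves a wrong $-$ at $(2,3)$ on diagonal $1$. At that point no move applies: $\Point$ and $\Horizontal$ need a $+$ at the target, $\Vertical$ needs a $+$ somewhere below in column $3$ (there is none), and $\Square$ needs a column to the right (there is none). The right choice was $\Vertical(2,3;3)$ instead of $\Point(3;3)$, but nothing in your diagonal-by-diagonal, look-to-the-right heuristic forces that choice, and you cannot justify it via Lemma~\ref{lem:A6} once the scheme has been modified.

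The paper's proof sidesteps this entirely by inducting on the dimension $n$ rather than on diagonals: it assumes a list $\mathcal{L}'$ of moves already cleans the first $n-1$ columns, and then \emph{edits certain moves in $\mathcal{L}'$} (promoting a $\Point$ to an $\Horizontal$, or a $\Vertical$ to a $\Square$, by adjoining the $n$-th column) so that the edited list simultaneously cleans column $n$. Crucially, all the structural reasoning (which wrong $-$ in column $n$ can be paired with what, via Lemmas~\ref{lem:A4}--\ref{lem:A6}) is carried out on the \emph{original} scheme $C(\boldepsi)$, never on an intermediate one. That is the idea your proposal is missing.
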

\begin{proof}
Let $\boldepsi=(\epsi_1,\ldots,\epsi_n)$ be the signs vector producing $C$. We prove the theorem by
making induction on the dimension $n$.\\
If $n=1$, we only have two possibilities: either %
$C =
\ytableausetup{smalltableaux,aligntableaux=bottom}
\begin{ytableau}
{-}
\end{ytableau}
\ytableausetup{nosmalltableaux}
$ %
and we have finished, or %
$C =
\ytableausetup{smalltableaux,aligntableaux=bottom}
\begin{ytableau}
{+}
\end{ytableau}
\ytableausetup{nosmalltableaux}
$ %
and the claim follows by applying $\Point(1;1)$.\\
Now, assume that $n>1$ and that the claim is true for every scheme generated by any signs pattern of
dimension less than $n$. Let $C'$ be the scheme obtained by removing the $n$-th column from $C$: this is
the scheme generated by the signs vector omitting $\epsi_n$ in $\boldepsi$.
By inductive hypothesis, there exists a list $\mathcal{L}'$ of moves which applied to $C'$ gives
$C_{-}'$, the array of dimension $n-1$ defined by all negative signs.
Our goal is to modify some elements in $\mathcal{L}'$ by replacing them with other moves which correct
all wrong symbol in the $n$-th column and that coincide with the old move on the common part in $C'$: in
this way we will obtain a new list $\mathcal{L}$ of moves that applied to $C$ give $C_{-}$.\\
Moreover, in order to prove that the algorithm can be correctly performed, we need to keep note of each
move we introduce, and of its effect on the $n$-th column. For this purpose we introduce the symbols
$D_{(1)}$, $D_{(2)}$ and so on, to denote the several new versions of the $n$-th column we get after
each new move is performed.
At the beginning we have $D_{(1)}$, which coincides with the $n$-th column in $C$.\\
We start running the column $D_{(k)}$ from the bottom to the top, looking for wrong signs $-$. In case
such signs do not appear, we skip this step and we go directly to the last one. On the contrary, suppose
that we have found a \emph{wrong} $-$ in $i$-th line. We will see that in each new version of the column
only some wrong positions are changed with respect to its previous version. As a consequence, the
\emph{wrong} $-$ in line $i$-th we have detected now was already there at the beginning, i.e.,
$C_{i,n}=-$ and $i+n$ is odd. We compute both $V^w(i,n)$ and $V_{\new}^w(i,n)$, which are the sum of
\emph{wrong} signs appearing below the $(i,n)$ position respectively in $C$, the original scheme, and in
the column $D_{(k)}$: at the beginning evidently numbers $V^w(i,n)$ and $V_{\new}^w(i,n)$ coincide, but
as the algorithm progresses the second may change its value. However, we will check that after each move
we will introduce is executed, the value of the index
\[
\big[\text{number of \emph{wrong} $+$ below $l$ in $n$-th column}\big]
- \big[\text{number of \emph{wrong} $-$ below $l$ in $n$-th column}\big]
\]
for each $l <i$ does not decrease. This proves that the number $V_{\new}^w(i,n)$ we compute in any time
is for sure $\geq V^w(i,n)$.
\smallskip\\
We note that the number $V(i,n)$ is odd, by Lemma~\ref{lem:A6}. In particular, it cannot be $0$.

Suppose that $V(i,n)>0$. Then $V^w(i,n)>0$ by Lemma~\ref{lem:A6}, and $V_{\new}^w(i,n)$ is positive as
well by the previous remark. This means that in some position below $(i,n)$ there is a \emph{wrong} $+$
in column $n$. Let $i'$ be the first (i.e., smallest) index $i'>i$ such that in the $(i',n)$ position
there is a wrong $+$. We add to $\mathcal{L}'$ the move $\Vertical(i,i';n)$: this move is independent of
the other moves, and converts the wrong $-$ and $+$ in those positions into two correct symbols. This
move does not change the value of
\[
\big[\text{number of \emph{wrong} $+$ below $l$ in $n$-th column}\big]
- \big[\text{number of \emph{wrong} $-$ below $l$ in $n$-th column}\big]
\]
for each $l <i$, because the move simply exchanges a $+$ with a $-$ both in positions below the $l$-th
position.
\smallskip

Suppose that $V(i,n)<0$. Then $V^w(i,n) < 0$ and $H^w(i,n)$ is positive, both by Lemma~\ref{lem:A6}.
Thus, in the $i$-th horizontal line to the left of $C_{i,n}$, and hence in $C'$, there is an excess of
\emph{wrong} $+$'s with respect to \emph{wrong} $-$'s. By induction there are moves in $\mathcal{L}'$
changing all these \emph{wrong} entries. Moves of type $\Horizontal$ or $\Square$ cannot be the unique
moves in $\mathcal{L}'$ affecting these positions, since they exchange both a \emph{wrong} $+$ and a
\emph{wrong} $-$ and therefore cannot remove the excess. Also a move of type $\Vertical(i,i';j)$
$\ytableausetup{smalltableaux,aligntableaux=bottom}
\begin{ytableau}
 \none     & \none[j] \\
 \none[i]  & {-}       \\
 \none[i'] & {+}       \\
\end{ytableau}
$ %
is not sufficient to remove the excess, since it removes only a \emph{wrong} $-$ from that line, a
fact which actually increases the excess. Thus, at least a move $\Point(i;j)$
$\ytableausetup{smalltableaux,aligntableaux=bottom}
\begin{ytableau}
 \none     & \none[j]\\
 \none[i]  & {+}      \\
\end{ytableau}
$ %
or a move $\Vertical(i',i;j)$
$\ytableausetup{smalltableaux,aligntableaux=bottom}
\begin{ytableau}
 \none     & \none[j]\\
 \none[i'] & {-}      \\
 \none[i]  & {+}      \\
\end{ytableau}
$ %
is in $\mathcal{L}'$. Let us take $j$ to be the greatest index $<n$ such that this happens. In the first
case we substitute $\Point(i;j)$ with $\Horizontal(i;j,n)$
$\ytableausetup{smalltableaux,aligntableaux=bottom}
\begin{ytableau}
 \none     & \none[j]& \none[n]   \\
 \none[i]  & {+}      & *(\CoTh){-}\\
\end{ytableau}
$ %
which has the same effect on the $C'$ part of the configuration.
In the second case we note that the signs at $(i',j)$, $(i,j)$ and $(i,n)$ positions are
$\ytableausetup{smalltableaux,aligntableaux=bottom}
\begin{ytableau}
 \none     & \none[j]& \none[n]   \\
 \none[i'] & {-}      & { }        \\
 \none[i]  & {+}      & *(\CoTh){-}\\
\end{ytableau}
$. %
By Lemma~\ref{lem:A4} the fourth corner $C_{i',n}$ of the square in $C$ is a \emph{wrong} $+$. We will
show in a moment that this is a $+$ also in $D_{(k)}$, i.e. it appears also at this stage of the
algorithm. Letting this fact for granted for the moment, we proceed substituting $\Vertical(i',i;j)$ in
$\mathcal{L}'$ with $\Square(i',i;j,n)$
$\ytableausetup{smalltableaux,aligntableaux=bottom}
\begin{ytableau}
 \none     & \none[j]& \none[n]   \\
 \none[i'] & {-}      & {+}        \\
 \none[i]  & {+}      & *(\CoTh){-}\\
\end{ytableau}
$ %
which again has the same effect on the $C'$ part of the configuration.
Both moves change
\[
\big[\text{number of \emph{wrong} $+$ below $l$ in $n$-th column}\big]
- \big[\text{number of \emph{wrong} $-$ below $l$ in $n$-th column}\big]
\]
in positions $l< i$. However, the first one actually simply removes a \emph{wrong} $-$, so that it
increases the index for all $l <i$, while the second one increases it when $ i' \leq l < i $ (because
it removes the \emph{wrong} $-$), and keeps unchanged its value for $l < i'$ (because then also the
cancellation of the \emph{wrong} $+$ at $C_{i',n}$ matters).\\
We execute the move we have selected, getting the new column which is $D_{(k+1)}$, by definition.
We repeat this cycle again and again, removing all \emph{wrong} $-$'s from the $n$-column in $C$.
Finally, we add $\Point$ moves to $\mathcal{L}'$ to remove any remaining \emph{wrong} $+$'s in last
column, if any exists.
\smallskip

The description of the algorithm ends here, but we have to resume the point we have skipped before, i.e.,
the proof of the fact that the \emph{wrong} $+$ appearing at the fourth corner $(i',n)$ of the square in
$C$ also appears in $D_{(k)}$, i.e. it appears also at that stage of the algorithm. Suppose the contrary,
i.e., that the \emph{wrong} $+$ is no more there, since it has been corrected at some earlier step of the
algorithm. Then, there had been some index $i''>i$ with $C_{i'',n}=-$ whose correction needed the
substitution of some move $\Vertical(i',i'';j')$ in $\mathcal{L}'$ with $\Square(i',i'';j',n)$ for some
$j'\neq j$, because this is the only possible way the algorithm can correct the $+$ at $(i',n)$ at some
previous step (the case $j'=j$ is for sure impossible, otherwise the \emph{wrong} $-$ at $(i',j')$ would
be corrected in that previous step and would not be available at $k$-th step). This means that we have
one of the following signs patterns in $C$:
\[
\ytableausetup{smalltableaux,aligntableaux=center}
\begin{ytableau}
 \none     & \none[j]& \none[j']& \none[n]  \\
 \none[i'] & {-}      & {-}       & {+} \\
 \none[i]  & {+}      & { }       & {-} \\
 \none[i'']& { }      & {+}       & {-} \\
\end{ytableau}
\quad \text{if $j'>j$, or}\quad
\begin{ytableau}
 \none     & \none[j']& \none[j]& \none[n]  \\
 \none[i'] & {-}       & {-}      & {+} \\
 \none[i]  & { }       & {+}      & {-} \\
 \none[i'']& {+}       & { }      & {-} \\
\end{ytableau}
\quad \text{if $j'<j$.}
\]
In both cases, at $(i,j')$ position we have a \emph{wrong} $+$ (by Lemma~\eqref{lem:A4}, when the square
in positions $(i',j)$, $(i',j')$, $(i,j)$, $(i,j')$ is considered), and the patterns in columns $j'$
and $n$ is
\[
\ytableausetup{smalltableaux,aligntableaux=center}
\begin{ytableau}
 \none     & \none[j']& \none[n]  \\
 \none[i'] & {-}      & {+} \\
 \none[i]  & {+}      & {-} \\
 \none[i'']& {+}      & {-} \\
\end{ytableau}
\]
in both cases. Moreover, $\mathcal{L}'$ contains $\Vertical(i',i'';j')$. However, this is impossible,
since the pattern shows that at $(i,j')$ we have a \emph{wrong} $+$ which is closer to the \emph{wrong}
$-$ at $(i',j')$ than the \emph{wrong} $+$ at $(i'',j')$: this means that when the algorithm has been
applied at an early stage to produce the moves in $\mathcal{L}'$ dealing with the $j'$-th column, we
should have contradicted the prescription according to which every vertical move contains the $+$ which
appears at the closest position to the $-$ in that move.
\end{proof}

An example can be useful to understand the algorithm. Let $C(\boldepsi)$ be the configuration in
dimension $7$ which is generated by signs $\boldepsi=(+,-,+,+,-,+,+)$. Thus,
\[
C(\boldepsi)
=
\ytableausetup{smalltableaux,centertableaux}
\begin{ytableau}
 {+}   & {-}   & {-}   & {-}   & {+}   & {+}   & {+}\\
 \none & {-}   & {-}   & {-}   & {+}   & {+}   & {+}\\
 \none & \none & {+}   & {+}   & {-}   & {-}   & {-}\\
 \none & \none & \none & {+}   & {-}   & {-}   & {-}\\
 \none & \none & \none & \none & {-}   & {-}   & {-}\\
 \none & \none & \none & \none & \none & {+}   & {+}\\
 \none & \none & \none & \none & \none & \none & {+}\\
\end{ytableau}
.
\]
Then, applying the algorithm iteratively, we get:
\begin{align*}
\begin{ytableau}
 {+}\\
\end{ytableau}
&\quad \implies \quad
{\mathcal L}^{(1)} =\{\Point(1;1)\}\\
\begin{ytableau}
 {+}   \\
 \none \\
\end{ytableau}
\
\begin{ytableau}
 *(\CoTh){-}\\
         {-}\\
\end{ytableau}
&\quad \implies \quad
{\mathcal L}^{(2)} =\{\Horizontal(1;1,2)\}\\
\begin{ytableau}
 {+}   & {-}   \\
 \none & {-}   \\
 \none & \none \\
\end{ytableau}
\
\begin{ytableau}
         {-}\\
 *(\CoTh){-}\\
         {+}\\
\end{ytableau}
&\quad \implies \quad
{\mathcal L}^{(3)} =\{\Horizontal(1;1,2), \Vertical(2,3;3)\}\\
\begin{ytableau}
 {+}   & {-}   & {-}   \\
 \none & {-}   & {-}   \\
 \none & \none & {+}   \\
 \none & \none & \none \\
\end{ytableau}
\
\begin{ytableau}
 *(\CoTh){-}\\
         {-}\\
         {+}\\
         {+}\\
\end{ytableau}
&\quad \implies \quad
{\mathcal L}^{(4)} =\{\Horizontal(1;1,2), \Vertical(2,3;3), \Vertical(1,4;4)\}\\
\begin{ytableau}
 {+}   & {-}   & {-}   & {-}   \\
 \none & {-}   & {-}   & {-}   \\
 \none & \none & {+}   & {+}   \\
 \none & \none & \none & {+}   \\
 \none & \none & \none & \none \\
\end{ytableau}
\
\begin{ytableau}
         {+}\\
         {+}\\
         {-}\\
 *(\CoTh){-}\\
         {-}\\
\end{ytableau}
&\quad \implies \quad
{\mathcal L}^{(5)} =\{\Horizontal(1;1,2), \Vertical(2,3;3), \Square(1,4;4,5)\}\\
\begin{ytableau}
 {+}   & {-}   & {-}   & {-}   & {+}   \\
 \none & {-}   & {-}   & {-}   & {+}   \\
 \none & \none & {+}   & {+}   & {-}   \\
 \none & \none & \none & {+}   & {-}   \\
 \none & \none & \none & \none & {-}   \\
 \none & \none & \none & \none & \none \\
\end{ytableau}
\
\begin{ytableau}
         {+}\\
         {+}\\
 *(\CoTh){-}\\
         {-}\\
 *(\CoTh){-}\\
         {+}\\
\end{ytableau}
&\quad \implies \quad
{\mathcal L}^{(6)} =\{\Horizontal(1;1,2), \Square(2,3;3,6), \Square(1,4;4,5), \Vertical(5,6;6)\}\\
\begin{ytableau}
 {+}   & {-}   & {-}   & {-}   & {+}   & {+}   \\
 \none & {-}   & {-}   & {-}   & {+}   & {+}   \\
 \none & \none & {+}   & {+}   & {-}   & {-}   \\
 \none & \none & \none & {+}   & {-}   & {-}   \\
 \none & \none & \none & \none & {-}   & {-}   \\
 \none & \none & \none & \none & \none & {+}   \\
 \none & \none & \none & \none & \none & \none \\
\end{ytableau}
\
\begin{ytableau}
         {+}\\
         {+}\\
         {-}\\
 *(\CoTh){-}\\
         {-}\\
         {+}\\
         {+}\\
\end{ytableau}
&\quad \implies \quad
{\mathcal L}^{(7)} =
\Big\{
\begin{array}{@{}l@{}}   
\Horizontal(1;1,2), \Square(2,3;3,6), \Square(1,4;4,5), \\
\Vertical(5,6;6), \Vertical(4,7;7), \Point(1;7)
\end{array}
\Big\}
\end{align*}

\bibliographystyle{plain}

\end{document}